\newcommand{\CC}{{\mathbb C}}
\newcommand{\HH}{{\mathbb H}}
\newcommand{\PP}{{\mathbb P}}
\def \-{\bar}
\newtheorem{theorem}{Theorem}[section]
\newtheorem{lemma}[theorem]{Lemma}
\newtheorem{corollary}[theorem]{Corollary}
\newtheorem{question}[theorem]{Question}
\newtheorem{example}[theorem]{Example}
\newtheorem{remark}[theorem]{Remark}
\newtheorem{claim}[theorem]{Claim}
\date{}
\begin{document}

\title{\bf Umehara algebra and complex submanifolds of indefinite complex space forms
\footnote{Keywords: complex submanifold, holomorphic isometric embedding, indefinite complex space form, Nash algebraic; 2010 Subject classes: 32H02, 32Q40, 53B35.}}

\author{
\ \ Xu Zhang\footnote{Corresponding author.}
, \ \ Donghai Ji
}

\vspace{3cm} \maketitle

\begin{abstract}
The Umehara algebra is studied with motivation on the problem of the non-existence of common complex submanifolds. In this paper, we prove some new results in Umehara algebra and obtain some applications.
In particular, if a complex manifolds admits a holomorphic polynomial isometric immersion to one indefinite complex space form, then it cannot admits a holomorphic isometric immersion to another  indefinite complex space form of different type. Other consequences include the non-existence of the common complex submanifolds for indefinite complex projective space or hyperbolic space and a complex manifold with a distinguished metric, such as homogeneous domains, the Hartogs triangle, the minimal ball, the symmetrized polydisc, etc, equipped with their intrinsic Bergman metrics, which generalizes more or less all existing results.
\end{abstract}

\bigskip
\section{Introduction}

The study of holomorphic isometric embedding between complex manifolds is a classical problem in complex differential geometry.
Starting with Bochner's paper \cite{[B]}, such questions have been studied extensively by
many authors (e.g. \cite{[C], [CU], [DL2], [HY1], [LZ], [Mo2], [Mo3], [U1], [YZ]}.) In his PhD. Thesis \cite{[C]}, E. Calabi obtained
the existence, uniqueness and global extension of a local holomorphic
isometry from a complex K\"ahler manifold into a complex space form.
In particular, Calabi proved that any complex space form
cannot be locally isometrically embedded into another complex space form
with a different curvature sign with respect to their canonical
K\"ahler metrics.
The key idea in Calabi's work is his diastasis function, which is a Hermitian symmetric, potential function of the K\"ahler metric that does not have the pluriharmonic terms. Using the diastasis function, Calabi was able to reduce the metric tensor equation to
functional identities involving diastasis functions. \\

Using the diastasis function, Umehara studied an interesting question
whether two complex space forms can share a common complex submanifold with induced metrics and he proved that
two complex space forms with different curvature signs cannot share
a common K\"ahler submanifold  \cite{[U1]}. 
Umehara later  defined the so called Umehara algebra and generalized  Calabi's existence and uniqueness results for holomorphic isometric embeddings from a complex manifold with an indefinite K\"ahler metric into an indefinite complex space form \cite{[U2]}. \\

Two K\"ahler manifolds $M_1, M_2$ share a
common complex  submanifold if a complex submanifold of $M_1$ endowed with the induced metric is biholomorphically isometric to a complex submanifold of $M_2$ endowed with the induced metric as well.
Di Scala and Loi showed in \cite{[DL2]} that Hermitian symmetric spaces of compact type and of non-compact type do not share common complex submanifolds.
In addition, the fact that Euclidean spaces and Hermitian symmetric spaces of compact types do not share common complex submanifolds follows from Umehara's result \cite{[U1]} and the classical
Nakagawa-Takagi embedding of Hermitian symmetric spaces of compact type into complex projective spaces.
Finally, it was shown by Huang and Yuan in \cite{[HY2]} that Euclidean spaces and Hermitian symmetric spaces
of non-compact types do not share common complex submanifolds.
In \cite{[CDY]}, Cheng, Di Scala and Yuan generalized the problem to indefinite complex space forms. In particular, they proved that the indefinite Euclidean space cannot share a common complex submanifold with an indefinite complex projective space or an indefinite complex hyperbolic space. However, whether or not an indefinite complex projective space and an indefinite complex hyperbolic space can share a common complex submanifold is an interesting open problem \cite{[CDY], [Y1]}. Such problem of existence/non-existence of common complex submanifold for various different complex manifolds remains an active research problem lately and is studied extensively by different authors (cf. \cite{[CH],[CN],[LM1],[Mos], [STT],[ZJ]}).
\\

The powerful method to attack such problem is to use Umehara algebra. As developed in \cite{[U2]}, if one can prove that certain function does not belong to the Umehara algebra or its quotient field, the negative answer to the problem thus follows. Umehara algebras are intensively studied in \cite{[CDY],[LM1]} and interesting common complex submanifolds problem are discussed there as their consequences.
\\

In this paper, we further consider Umehara algebra and one key feature is that it may involve non-Nash algebraic functions.
The main theorem in section 2 describes that certain real analytic functions are not contained in the Umehara algebra or its field of fractions (cf. Theorem \ref{UA}).
 On one hand, such Umehara algebra leads to a partial answer to the problem of the common complex submanifolds for indefinite complex projective and hyperbolic space. More precisely, we prove that, if a K\"ahler manifold admits a holomorphic polynomial isometric immersion into one indefinite complex space form, then it cannot admits a holomorphic isometric immersion into another indefinite complex space form of different type (cf. Corollary \ref{UA2}).
On the other hand, it has deep consequences in the problem of the common complex submanifolds for indefinite complex space forms and a complex manifold with a distinguished metric. In particular, we are able to provide the sufficient condition for a K\"ahler manifold that does not share common  complex submanifolds with an indefinite complex space form (cf. Corollary \ref{suf}).
The examples include bounded homogeneous domains, the Hartogs triangle, the minimal ball, the symmetrized polydisc, etc, equipped with their intrinsic Bergman metrics, which generalizes more or less all existing results.
The method in this paper is developed from the ideas of Huang and Yuan in \cite{[HY1],[HY2]} for Nash algebraic functions and we now further generalize it to certain non-Nash algebraic functions.
\\

The paper is organized as follows: in section 2, we state the main theorem in the context of the Umehara algebra and give the proof; in section 3, the applications of the holomorphic isometric embeddings are provided. In particular, we show that if a K\"ahler manifold admits a holomorphic polynomial isometric immersion into one indefinite complex space form then it cannot admits a holomorphic isometric immersion into another indefinite complex space form of different type. Moreover, we give the sufficient condition for a K\"ahler manifold that does not share common  complex submanifolds with an indefinite complex space form.

\bigskip

{\bf Acknowledgement:} We thank the referees for very helpful suggestions and comments.

\section{The Umehara algebra}
\subsection{The statement of the main theorem}

Umehara introduces in \cite{[U2]}  an associate algebra on a complex manifold $M$ and uses the algebra to study the holomorphic isometric embedding between complex manifolds. Since the interest here is the local existence of a complex submanifold at some point $p$ in $M$, we modify Umehara's definition as follows. Use $\mathcal{O}_p$ to denote the local ring of germs of holomorphic functions at $p$ and let $\tilde{\mathcal{O}}_p = \{\chi \in \mathcal{O}_p | \chi(p)=0\}$.

Define 
$$ \Lambda_p := \left\{ f | f = \sum_{j=1}^{\kappa} a_j |\chi_j|^2 , \chi_j \in \mathcal{O}_p , a_j \in \mathbb{R} \right\} , $$
and
let $K_p$ be the field of fractions of $\Lambda_p$. It is shown in \cite{[U2]} (cf. Theorem 3.2 in \cite{[U2]}) that every $f \in \Lambda_p$ can be written as $f= h + \overline{h} + \sum_{j=1}^{\kappa} a_j |\chi_j|^2 $ for $h \in\mathcal{O}_p,   a_j \in \mathbb{R}$ and linearly independent $\chi_1, \cdots, \chi_\kappa \in \tilde{\mathcal{O}}_p $.
Moreover, define
$$ \tilde\Lambda_p := \left\{ f = a_0 + \sum_{j=1}^{\kappa} a_j |\chi_j|^2 \in \Lambda_p |  \chi_j \in\tilde{\mathcal{O}}_p,   a_j \in \mathbb{R} \right\}.$$
Let $\prod_{j=1}^t \Lambda_p^{\mu_j} = \left\{ \prod_{j=1}^t h_j^{\mu_ j} | h_1, \cdots, h_t \in \Lambda_p \right\}$ for $\mu_1, \cdots, \mu_t \in \mathbb{R}$.
In particular, $\Lambda_p^{1} \cdot \Lambda_p^{-1} = K_p$ is  the field of fractions of $\Lambda_p$.
Note that the germs of real numbers, denoted by $\mathbb{R}_p$, belong to $\tilde\Lambda_p$.

\medskip

The main theorem of the paper is as follows.

\begin{theorem}\label{UA}
Let $p$ be a fixed point on a complex manifold $M$. Let $\chi_1, \cdots, \chi_m \in \mathcal{O}_p$ be non-constant germs of holomorphic functions at $p$ and denote $\chi=(\chi_1, \cdots, \chi_m)$.  
  The following statements hold:

\begin{itemize}

\item[(i)] If   $\chi_1, \cdots, \chi_m \in \tilde{\mathcal{O}}_p$ are germs of holomorphic polynomials,    $a_1, \cdots, a_m$ are non-zero real numbers and $\alpha<0$, then 
$$\left(1+ \sum_{i=1}^m a_i |\chi_i|^2\right)^{\alpha} \not\in \Lambda_p \setminus \mathbb{R}_p.$$

\item[(ii)] Let  $\chi_1, \cdots, \chi_m \in \tilde{\mathcal{O}}_p$ and $H(z, \overline w) = R\left( \prod_{j=1}^{\kappa_1} H_j^{\mu_j}(z, \overline{w})  \right) \prod_{j=\kappa_1+1}^{\kappa_1+\kappa_2}H_j^{\mu_j}(z, \overline w) $, where \\
$H_1(z, \overline{w}), \cdots, H_{\kappa_1+\kappa_2}(z, \overline{w}) $ are Hermitian symmetric, Nash algebraic functions and do not have non-constant pluriharmonic terms, $R(\cdot)$ is a rational function with real coefficients and $\mu_1, \dots, \mu_{\kappa_1+\kappa_2 } \in \mathbb{R}$. Then

\begin{equation}\label{I}
 \exp \left( H(\chi, \overline{\chi}) \right) \not\in \Lambda_p^{\mu} \setminus \mathbb{R}_p  , 
 \end{equation} and
 \begin{equation}\label{II}
 \log \left( H(\chi, \overline{\chi}) \right) \not\in \tilde\Lambda_p^{\mu} \setminus \mathbb{R}_p .
 \end{equation}

\item[(iii)] Let   $H(z, \overline{w}) $ be a Hermitian symmetric, Nash algebraic function. Then 
\begin{equation}\label{III}
 \exp \left( H(\chi, \overline{\chi}) \right) \not\in  \tilde\Lambda_p^{\mu} \setminus \mathbb{R}_p , 
 \end{equation} and
\begin{equation}\label{IV}
\log H(\chi, \overline{\chi})  \not\in \tilde\Lambda^\mu_p \setminus \mathbb{R}_p.
 \end{equation}
If, in addition,  $H(z, \overline{w}) $ does not have non-constant pluriharmonic terms and $\chi_1, \cdots, \chi_m \in \tilde{\mathcal{O}}_p$, then 
\begin{equation}\label{V}
 \exp \left( H(\chi, \overline{\chi}) \right) \not\in  \prod_{j=1}^\kappa  \Lambda_p^{\nu_j} \setminus \mathbb{R}_p , 
 \end{equation} and
\begin{equation}\label{VI}
\log H(\chi, \overline{\chi})  \not\in \prod_{j=1}^\kappa \tilde{\Lambda}_p^{\nu_j} \setminus \mathbb{R}_p.
 \end{equation}

\end{itemize}
\end{theorem}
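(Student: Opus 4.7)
The plan is to argue by contradiction: in each of the six statements assume the membership holds, complexify by treating $\overline{z}$ as an independent holomorphic variable $\xi$ (and correspondingly $\overline{\chi_j(z)}$ as $\overline{\chi_j}(\xi)$), and derive a contradiction from a \emph{finite rank versus infinite rank} comparison of the mixed Taylor coefficient matrix $\bigl(\partial_z^I \partial_\xi^J F(p,\overline{p})\bigr)_{|I|,|J|\geq 1}$. The basic observation is Umehara's canonical form, already recalled in the excerpt: every $f \in \Lambda_p$ can be written $f = h + \overline{h} + \sum_{j=1}^\kappa a_j|\chi_j|^2$, whose complexification $h(z) + \overline{h}(\xi) + \sum_j a_j \chi_j(z)\overline{\chi_j}(\xi)$ has a pluriharmonic part killed by any mixed derivative $\partial_z\partial_\xi$, after which the Hermitian coefficient matrix has rank at most $\kappa$. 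Thus membership in $\Lambda_p$, or in a product/power of $\Lambda_p$-elements after taking $\partial\overline{\partial}\log$, places a strong finite-rank constraint on the non-pluriharmonic part of the complexified function.

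For part (i), the complexified left-hand side is $(1+P(z,\xi))^\alpha$ with $P(z,\xi) = \sum a_i\chi_i(z)\overline{\chi_i}(\xi)$ a nonzero polynomial vanishing at $(p,\overline p)$. Expanding by the binomial series $\sum_k \binom{\alpha}{k}P^k$, which does not truncate since $\alpha<0$, and exploiting that $P$ has nontrivial bidegree in $(z,\xi)$, I would show that the mixed coefficient matrix has infinite rank; this contradicts the bound coming from $\Lambda_p$ and forces the function into $\mathbb{R}_p$. The toy case $m=1$, $\chi_1=z_1$ gives $(1+a_1 z_1\xi_1)^\alpha = \sum_k \binom{\alpha}{k}a_1^k z_1^k\xi_1^k$, manifestly of infinite rank, and the general case reduces by a Vandermonde-style argument on the distinct bidegrees of the monomials appearing in $P^k$.

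Parts (ii) and (iii) are handled in parallel, with the $\exp$ and $\log$ statements dual to each other under exponentiation/logarithm of a hypothetical identity. Membership in $\prod_j \Lambda_p^{\nu_j}$ is linearised by applying $\partial_z\partial_\xi\log$ to both sides of the putative identity, which turns a product $\prod f_j^{\nu_j}$ into $\sum \nu_j\,\partial_z\partial_\xi\log f_j$ and simultaneously strips the outermost $\exp$ from $\exp(H(\chi,\overline\chi))$, leaving $\partial_z\partial_\xi H(\chi,\overline\chi)$. Using that $H$ is Hermitian symmetric, Nash algebraic, and (where assumed) free of non-constant pluriharmonic terms, the Huang--Yuan rank-propagation technique applied to the complexified identity yields that the non-pluriharmonic part of $H(\chi,\overline\chi)$ must satisfy a Nash algebraic relation with the $\log f_j$'s; matching Taylor coefficients bidegree by bidegree forces it to vanish, so the original function reduces to $\mathbb{R}_p$. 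The extra rational prefactor $R(\cdot)$ in (ii) is cleared by writing $R$ as a ratio and multiplying through, remaining inside $K_p$-type expressions.

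The hardest step, I expect, is the full generality of (iii): passing from a single power $\Lambda_p^\mu$ to an arbitrary finite product $\prod \Lambda_p^{\nu_j}$ with arbitrary real exponents, and in the first half of (iii) also dropping the pluriharmonic-free hypothesis on $H$. Here the substitution $f = g^{1/\mu}$ is unavailable, so one must argue directly with transcendence between the Nash algebraic generator $H$ and the (possibly non-Nash) factors $f_j^{\nu_j}$; in particular, one has to show that the transcendental exponential of a Nash algebraic Hermitian form cannot be cancelled by any real-power product of Nash algebraic Hermitian forms. The technical heart is to extend the Huang--Yuan Nash algebraic framework to this non-Nash regime, presumably through monodromy and ramification analysis along the complexified locus $\{H(z,\xi)=0\}$ combined with a careful bookkeeping of the rank of the mixed coefficient matrix; this is precisely the non-Nash generalisation advertised in the introduction of the paper.
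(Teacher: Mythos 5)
Your proposal has the right outer shell (contradiction, polarization of $\overline z$ into an independent variable, exploiting Umehara's canonical form), but it has genuine gaps at the two places where the actual work happens. For part (i), your ``infinite rank versus finite rank'' reduction is sound in principle, but the infinite-rank claim is only verified in the toy case $\chi_1=z_1$, where the expansion $\sum_k \binom{\alpha}{k}a_1^k z_1^k\xi_1^k$ is literally diagonal. In general the powers $P^k$ of $P(z,\xi)=\sum_i a_i\chi_i(z)\overline\chi_i(\xi)$ contribute to overlapping bidegrees (already for $\chi_1=z,\ \chi_2=z^2$ with signs $a_i$ of mixed sign), so cancellations across different $k$ are possible and a ``Vandermonde-style argument on distinct bidegrees'' is not an argument --- nothing you wrote rules out that all sufficiently deep minors degenerate. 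The paper goes the other way around: differentiating the polarized identity repeatedly in $w$ at $w=0$ yields a linear system $P=A\cdot Q$ whose coefficient matrix $A$, built from the Taylor coefficients of the $\overline g_j$, has full rank $n$ precisely because the $g_j$ are linearly independent; inverting a full-rank $n\times n$ block expresses each $g_j$ as a polynomial in $\chi_1,\dots,\chi_m$, and then the contradiction is simply that the right-hand side of the polarized identity is a polynomial while $\bigl(1+\sum a_i\chi_i(z)\overline\chi_i(w)\bigr)^\alpha$ with $\alpha<0$ is not. That algebraicity-of-$g_j$ step is exactly what your sketch would need to make the rank count honest, and once you have it you no longer need the infinite-rank claim at all.

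For parts (ii) and (iii) the gap is more fundamental: the factors in $\prod_j\Lambda_p^{\nu_j}$ involve \emph{arbitrary} holomorphic germs $g_\alpha,h_\beta$, and your plan (apply $\partial_z\partial_\xi\log$, then ``Huang--Yuan rank propagation'' and ``matching Taylor coefficients bidegree by bidegree'') has no mechanism to control them --- note in particular that $\partial_z\partial_\xi\log f$ of a finite-rank-plus-constant $f$ is \emph{not} finite rank, so the rank framing from (i) does not transfer. The paper's essential missing ingredient is the algebraicity lemma (Lemma 2.2, after Loi--Mossa and [ZJ]): choose a maximal algebraically independent subset $\phi_1,\dots,\phi_d$ among \emph{all} germs $\chi_i,g_\alpha,h_\beta$ over the rational functions, write every germ as a Nash algebraic function $\hat\phi_j(t,\phi_1,\dots,\phi_d)$, and pass to genuinely independent variables $(s,X,t)$. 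Then each $t$-derivative at $t=0$ of the resulting function $\Psi_j(s,X,t)$ is Nash algebraic in $(s,X)$ and vanishes on the graph $X=(\phi_1(s),\dots,\phi_d(s))$; a minimal-polynomial argument plus the algebraic independence forces $\Psi_j^{(l)}(s,X)\equiv 0$ identically, not just on the graph. This converts the germ identity into a functional identity of the form $\exp(\text{Nash algebraic})=$ product of real powers of Nash algebraic functions (or its $\log$ analogue), and the contradiction is then a \emph{growth} comparison (Lemma 2.5: exponentials beat algebraic growth), together with a non-constancy check (Lemma 2.4) --- not the monodromy/ramification analysis along $\{H=0\}$ you propose, which is both undeveloped and harder than needed. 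Without the transcendence-basis step, Taylor-coefficient matching cannot exclude identities like $\exp(H(\chi,\overline\chi))=\prod_j f_j^{\nu_j}$ with irrational $\nu_j$, because the unknown germs leave the coefficients on the right completely unconstrained. Your instinct that the heart is ``exp of Nash algebraic cannot be cancelled by real-power products of Nash algebraic forms'' is correct, but the reduction that earns the right to apply such a statement is absent from your proposal.
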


\subsection{Proof of the main theorem}

We first prove Part (i) and Part (ii), (iii) will be proved via a different method.

For Part (i), we argue by contradiction and suppose $\left(1+ \sum_{i=1}^m a_i |\chi_i|^2\right)^{\alpha} \in \Lambda_p \setminus \mathbb{R}_p$. Namely, there exists an open neighborhood $U \subset M$ of $p$,  $h \in \mathcal{O}_p$ and  linearly independent $g_1, \cdots, g_n \in \tilde{\mathcal{O}}_p$ such that $$\left(1+ \sum_{i=1}^m a_i |\chi_i|^2\right)^{\alpha} = h + \overline{h} + \sum_{j=1}^r |g_j|^2 - \sum_{j=r+1}^n |g_j|^2$$ on $U$. We choose a holomorphic coordinate $\{z\}$ on $U$ with $z(p)=0$.
By the standard argument to get rid of $\partial\overline\partial$ (cf. \cite{[CU],[Mo2]}), since there is no non-constant pluriharmonic term on the left hand side, it follows that $h + \overline{h}=1$.

First of all, we will show that each $g_j$ can be written as a polynomial of $\chi_1, \cdots, \chi_m$.
By polarization, the identity above is equivalent to

\begin{equation}\label{polar1}
\left( 1+\sum_{i=1}^m a_i \chi_i(z) \overline \chi_i(w) \right)^\alpha = 1+ \sum_{i=1}^r  g_i(z) \overline g_i(w)-\sum_{j=r+1}^n g_j(z) \overline g_j(w),
\end{equation}
where $(z, w) \in U\times\hbox{conj}({U})$, $\hbox{conj}({U})=\{z \in \CC | \overline z \in U\}$, and $\overline{\chi}_i(w) = \overline{\chi_i(\overline{w})}$.
\\  \
Taking $l$-th derivative of  the equation (\ref{polar1}) in $w$ for $l=1,2,\cdots$, and then evaluating at $w=0$, we have the following matrix equation:
\begin{equation}\notag
P = A \cdot Q,
\end{equation}
where
\begin{equation}\notag
A=\begin{bmatrix}
 \cdots \frac{\partial \overline {g}_{i}}{\partial w}(0) \cdots -\frac{\partial \overline {g}_{j}}{\partial w}(0) \cdots \\
\vdots\\
 \cdots \frac{\partial^l \overline {g}_{i}}{\partial w^l}(0) \cdots -\frac{\partial^l \overline {g}_{j}}{\partial w^l}(0) \cdots \\
\vdots\\
\end{bmatrix}_{\infty\times n,} ~
Q=\begin{bmatrix}
\vdots\\ g_i(z) \\ \vdots \\g_j(z) \\ \vdots
\end{bmatrix}_{n \times 1,}
\text{and}~
P=\begin{bmatrix}
 \vdots\\ p_j \\  \vdots
\end{bmatrix}_{\infty\times1,}\end{equation} with each $p_k$ being polynominal function in $\chi_1(z),\cdots,\chi_m(z)$. 

Now we show rank$(A)=n$.
Suppose rank$(A)=d< n$. Without loss of generality, we assume that the first $d $ columns are linearly independent in the
 coefficient matrix $A$, denoted by $L_1,L_2,\cdots,L_d$. Then, for any $d'$ with $d< d' $, the $d'$-th columns is linear combination of $L_1,L_2,\cdots,L_d$, i.e.
\begin{equation}\notag\label{function1}
 L_{d'}=\sum_{i=1}^{d}C_i L_i.
\end{equation}
In other words, $\overline g_{d'}$ can be written as linear combination of $\{ \overline g_1, \cdots, \overline g_d\}$ by the Taylor expansion, meaning that $\{g_1, \cdots, g_n\}$ is not linear independent. This is a contradiction.

Since rank$(A)=n$, there exist $k=k_1, \cdots, k_n$ such that $k_1$-row to $k_n$-row in matrix $A$ are linearly independent and all other rows can be written as linear combinations of $k_1$-row up to $k_n$-row. Re-organize the matrices $A$ and $P$ by deleting rows other than $k_1$-row to $k_n$-row, and denote the corresponding matrices by $A_n, P_n$ respectively. We obtain the matrix equation $$P_n = A_n \cdot Q.$$ Since $A_n$ is nondegenerate, $Q = A_n^{-1} P_n$. In other words, each $g_i, g_j$ can be written as the linear combination of these $p_j$. As a consequence, they are still polynomials functions in $\chi_1(z),\cdots,\chi_m(z)$ and thus are  polynomials as well since $\chi_1, \cdots, \chi_m$ are polynomials.

Therefore, we reach a contradiction to (\ref{polar1}) as the right hand side is a polynomial while the left hand side is not. This completes the proof of Part (i).

\bigskip

If $H(\chi, \overline\chi)$ is constant, then
Part (ii), (iii) are trivial. Otherwise, we also argue by contradiction.  Suppose the conclusion of Part (ii) and (iii) is false. Namely, there exist an open neighborhood $U$ of $p$, $a_j \in \mathbb{R}$ and $g_1, \cdots, g_k, h_1, \cdots, h_l \in \mathcal{O}_p$ or $\tilde{\mathcal{O}}_p$ accordingly, such that 

\begin{equation}\label{eq2}
\exp \left( H(\chi, \overline\chi) \right) = \prod_{j=1}^\kappa \left(a_j+ \sum_{\alpha=1}^{k_j} |g_{\alpha}|^2 - \sum_{\beta=1}^{l_j} |h_{\beta}|^2 \right)^{\nu_j}
\end{equation}
and

\begin{equation}\label{eq3}
\log \left( H(\chi, \overline\chi) \right) = \prod_{j=1}^\kappa \left( a_j + \sum_{\alpha=1}^{k_j}  |g_{\alpha}|^2 - \sum_{\beta=1}^{l_j} |h_{\beta}|^2 \right)^{\nu_j} 
\end{equation}
on $U$. We again choose a holomorphic coordinate $\{z\}$ on $U$ with $z(p)=0$.
We will need the following algebraicity result as in \cite{[LM1]} (cf. Lemma 2.3 in \cite{[ZJ]}). 

\begin{lemma}\label{algebra}
Writing $S= \{\phi_1, \cdots, \phi_{k+l+m} \}= \{\chi_1, \cdots, \chi_m, g_1, \cdots, g_{k}, h_1, \cdots, h_l \}$, then there exists a maximal algebraic independent subset $\{\phi_1, \cdots, \phi_d\} \subset S$  over the field $\cal{R}$ of rational functions on $U$, and Nash algebraic functions $\hat\phi_j(t, X_1, \cdots, X_d)$ defined in a neighborhood $\hat U$
of $\{(s, \phi_1(s),\cdots , \phi_d(s))| s \in U\}$,  such that $\phi_j(t) = \hat\phi_j(t, \phi_1(t), \cdots, \phi_d(t))$ for all $1 \leq j \leq k+l+m$ after
shrinking $U$ toward the origin if needed.
\end{lemma}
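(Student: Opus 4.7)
The plan is to reduce the claim to field theory plus the holomorphic implicit function theorem. Since $S$ has finite cardinality, an inductive selection over $\mathcal{R}$ yields, after relabeling, a maximal algebraically independent subset $\{\phi_1,\dots,\phi_d\}\subset S$ over $\mathcal{R}$. For every $j>d$ the element $\phi_j$ is algebraic over the field $\mathcal{R}(\phi_1,\dots,\phi_d)$, so there exists a nonzero polynomial $P_j(X_1,\dots,X_d,Y)$ with coefficients in $\mathcal{R}$ satisfying $P_j(\phi_1(t),\dots,\phi_d(t),\phi_j(t))\equiv 0$. Clearing denominators gives $\widetilde P_j\in\mathbb{C}[t,X_1,\dots,X_d,Y]$ with the same vanishing property. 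Choosing $\widetilde P_j$ to have minimal positive $Y$-degree forces it to be irreducible as a polynomial in $Y$ over $\mathbb{C}(t,X_1,\dots,X_d)$, so in characteristic zero its $Y$-discriminant $D_j(t,X_1,\dots,X_d)$ is a nonzero polynomial.

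Next I would observe that algebraic independence of $\phi_1,\dots,\phi_d$ over $\mathcal{R}$ forces $D_j(t,\phi_1(t),\dots,\phi_d(t))\not\equiv 0$ on $U$; otherwise one would have produced a nontrivial algebraic relation among the $\phi_i$ over $\mathcal{R}$. Thus, after shrinking $U$ toward the origin (and using that there are only finitely many $j>d$), we may assume each $D_j(t,\phi_1(t),\dots,\phi_d(t))$ is nonvanishing on $U$. In particular $\partial\widetilde P_j/\partial Y\neq 0$ along the graph $\Gamma=\{(s,\phi_1(s),\dots,\phi_d(s)):s\in U\}$ evaluated at $Y=\phi_j(s)$, and the holomorphic implicit function theorem produces a holomorphic branch $\hat\phi_j$ of the equation $\widetilde P_j=0$ in a neighborhood $\widehat U$ of $\Gamma$ with $\hat\phi_j(t,\phi_1(t),\dots,\phi_d(t))=\phi_j(t)$. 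Since $\hat\phi_j$ satisfies $\widetilde P_j=0$, it is Nash algebraic by definition; for $1\le j\le d$ we trivially set $\hat\phi_j(t,X_1,\dots,X_d)=X_j$.

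The main technical point I expect to check carefully is that a single neighborhood $\widehat U$ of the entire graph $\Gamma$ can be chosen on which all branches $\hat\phi_{d+1},\dots,\hat\phi_{k+l+m}$ are simultaneously defined and select the correct values along $\Gamma$. Fixing a basepoint $s_0\in U$ at which every $D_j$ is nonzero and at which each local branch matches $\phi_j(s_0)$, standard analytic continuation of algebraic function germs along $\Gamma$ inside the Zariski open set $\{\prod_{j>d}D_j\neq 0\}$ (together with requiring the leading coefficients of the $\widetilde P_j$ in $Y$ to be nonvanishing, after further shrinking $U$ if necessary) yields the desired global branches, and a tubular neighborhood of $\Gamma$ inside this open set then furnishes $\widehat U$. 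Since the lemma is drawn from \cite{[LM1]} and \cite{[ZJ]}, the detailed bookkeeping is already handled there, and the version sketched here is exactly what the subsequent application to equations (\ref{eq2}) and (\ref{eq3}) in the proof of Theorem~\ref{UA} requires.
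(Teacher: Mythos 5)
The paper does not actually prove Lemma \ref{algebra}: it is quoted from \cite{[LM1]} (cf.\ Lemma 2.3 in \cite{[ZJ]}), so your proposal must stand on its own, and it contains a genuine gap at the discriminant step. From $D_j(t,\phi_1(t),\dots,\phi_d(t))\not\equiv 0$ you infer that ``after shrinking $U$ toward the origin'' each $D_j(t,\phi_1(t),\dots,\phi_d(t))$ is nonvanishing on $U$. That inference fails: the zero set of this holomorphic function is a proper analytic subset of $U$, but it may very well pass through the origin, in which case every neighborhood of the origin meets it. Your fallback --- analytic continuation of branches inside the Zariski-open set where $\prod_{j>d} D_j\neq 0$ (and the leading coefficients are nonvanishing) --- has the same defect: staying in that set may force you to move the base point away from the origin. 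That is not permissible here, because the phrase ``shrinking $U$ toward the origin'' is in the statement precisely so that the subsequent proof of Theorem \ref{UA} can evaluate the polarized identities at $t=0$ and exploit $\chi_j(0)=0$, $g_\alpha(0)=0$, $h_\beta(0)=0$ there.

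Moreover, the gap is not mere bookkeeping: for an \emph{arbitrary} maximal algebraically independent subset, fixed first as you do, the conclusion can be outright false at the origin. Take one variable, $\phi_1(t)=t^2e^{t}$ and $\phi_2(t)=t e^{t/2}$. Then $\{\phi_1\}$ is a maximal algebraically independent subset (over $\cal{R}$), $\phi_2^2=\phi_1$, the minimal polynomial is $Y^2-X_1$, and its discriminant $4X_1$ vanishes exactly at the graph point $(0,\phi_1(0))=(0,0)$. No admissible branch exists there: if $F(t,X_1)$ were holomorphic near $(0,0)$ and Nash algebraic with $F(t,t^2e^{t})=te^{t/2}$, an irreducible polynomial $P$ with $P(t,X_1,F(t,X_1))\equiv 0$ would vanish on the curve $(t,\,t^2e^{t},\,te^{t/2})$; since $e^{t/2}$ is transcendental over $\CC(t)$, the polynomial $P(t,t^2u^2,tu)$ vanishes identically, so $P$ vanishes on the quadric $X_1=Y^2$, forcing $P=c\,(Y^2-X_1)$ and hence $F^2=X_1$ near $(0,0)$, which is impossible for a holomorphic $F$. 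The lemma survives only because of the existential quantifier over the subset: in this example one must select $\{\phi_2\}$ as the basis, after which $\hat\phi_1(t,X_1)=X_1^2$ works. So a correct proof must choose the transcendence basis (and branches) adapted to the base point rather than arbitrarily, and your closing appeal to \cite{[LM1]}, \cite{[ZJ]} for the ``detailed bookkeeping'' defers exactly the essential point.
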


By polarization as in (\ref{polar1}), equations (\ref{eq2}), (\ref{eq3}) are equivalent to
\begin{equation}\label{polar2}
\begin{split}
\exp \left( H(\chi(z), \overline\chi(w)) \right) &= \prod_{j=1}^\kappa \left(a_j+ \sum_{\alpha=1}^{k_j} g_{\alpha}(z) \overline{g}_{\alpha}(w) - \sum_{\beta=1}^{l_j} h_{\beta}(z) \overline{h}_{\beta}(w) \right)^{\nu_j}\\
\end{split}
\end{equation}
and
\begin{equation}\label{polar3}
\begin{split}
\log \left( H(\chi(z), \overline\chi(w)) \right) &= \prod_{j=1}^\kappa \left(a_j + \sum_{\alpha=1}^{k_j} g_{\alpha}(z) \overline g_{\alpha}(w) - \sum_{\beta=1}^{l_j} h_{\beta}(z) \overline h_{\beta}(w) \right)^{\nu_j}\\
\end{split}
\end{equation}
for $(z, w) \in U\times\hbox{conj}({U})$. 

Denote $X=(X_1, \cdots, X_d)$. For $(s, X, t) \in \hat U \times\hbox{conj}({U}) $, now define
\begin{equation}\label{A}
\begin{split}
\Psi_1(s, X, t)=H(\hat\chi(s, X), \overline\chi(t)) -  \sum_{j=1}^\kappa \nu_j \log \left( a_j+ \sum_{\alpha=1}^{k_j} \hat g_{\alpha}(s, X) \overline g_{\alpha}(t) - \sum_{\beta=1}^{l_j} \hat h_{\beta}(s, X) \overline h_{\beta}(t)  \right),
\end{split}
\end{equation}

\begin{equation}\label{B}
\Psi_2(s, X, t) = \log H(\hat\chi(s, X), \overline\chi(t)) - \prod_{j=1}^\kappa \left(a_j+\sum_{\alpha=1}^{k_j} \hat g_{\alpha}(s, X) \overline g_{\alpha}(t) - \sum_{\beta=1}^{l_j} \hat h_{\beta}(s, X) \overline h_{\beta}(t) \right)^{\nu_j}.
\end{equation}



\begin{claim}
Under the corresponding assumptions, $\Psi_j^{(l)}(s, X):=\frac{\partial ^l}{\partial t^l }\Psi_j (s, X, 0)$ is Nash algebraic for any $j=1, 2$ and $l \in \mathbb{N}$.
\end{claim}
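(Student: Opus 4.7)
The plan is to expand $\partial_t^l \Psi_j(s, X, 0)$ by repeated application of the Leibniz rule and Fa\`a di Bruno's formula, thereby reducing Nash algebraicity of $\Psi_j^{(l)}$ to that of a handful of elementary building blocks. The blocks are (a) $\hat\chi, \hat g_\alpha, \hat h_\beta$, which are Nash algebraic in $(s,X)$ by Lemma \ref{algebra}; (b) the values $\bar\chi^{(k)}(0), \bar g_\alpha^{(k)}(0), \bar h_\beta^{(k)}(0)$, which are numerical constants; and (c) partial derivatives of $H$ in the antiholomorphic slot, evaluated at a fixed point in that slot and composed with $\hat\chi(s,X)$ in the holomorphic slot. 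Once these are shown to be Nash algebraic, the sums and products assembling them automatically are.

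First I would dispose of the $F_j$ factors. From (a), (b), the quantities $F_j(s,X,0)$ and $\partial_t^k F_j(s,X,0)$ are polynomial in $\hat g_\alpha, \hat h_\beta$ with constant coefficients, hence Nash algebraic. For the $\log F_j$ terms appearing in $\Psi_1$, Fa\`a di Bruno presents $\partial_t^l \log F_j|_{t=0}$ as a rational function of the $\partial_t^k F_j|_{t=0}$ with $F_j(s,X,0)$ appearing only in the denominator, so it is Nash algebraic irrespective of whether we are in $\Lambda_p$ or $\tilde\Lambda_p$. For the $F_j^{\nu_j}$ terms appearing in $\Psi_2$, Fa\`a di Bruno produces factors of the form $F_j(s,X,0)^{\nu_j - k}$ with real exponents, which are generically \emph{not} Nash algebraic; however, the cases in which $\Psi_2$ is relevant all live inside $\tilde\Lambda_p^\mu$ or $\prod \tilde\Lambda_p^{\nu_j}$, forcing $\bar g_\alpha(0) = \bar h_\beta(0) = 0$ and hence $F_j(s,X,0) = a_j$. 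The offending factors collapse to the constants $a_j^{\nu_j - k}$ and Nash algebraicity is restored.

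Next I would handle the $H$ and $\log H$ pieces. In Part (iii), $H$ is Nash algebraic, so by the chain rule $\partial_t^l H(\hat\chi(s,X), \bar\chi(t))|_{t=0}$ is a linear combination of compositions $(\partial_{\bar w}^\gamma H)(\hat\chi(s,X), \bar\chi(0))$ with constant coefficients coming from the Taylor data of $\bar\chi$. These are Nash algebraic, and the same argument applied through Fa\`a di Bruno handles $\log H$, with $H(\hat\chi(s,X),\bar\chi(0))$, which is Nash algebraic and can be arranged to be nonzero, appearing in denominators.

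The main obstacle I expect is Part (ii), where $H = R\bigl(\prod_{j=1}^{\kappa_1} H_j^{\mu_j}\bigr) \prod_{j=\kappa_1+1}^{\kappa_1+\kappa_2} H_j^{\mu_j}$ itself involves real powers of Nash algebraic functions and is therefore not Nash algebraic in general. The resolution will use the two standing hypotheses of Part (ii): every $H_j$ has no non-constant pluriharmonic terms (forcing $H_j(z,0) \equiv H_j(0,0) = c_j$ to be a numerical constant), and $\chi_i \in \tilde{\mathcal{O}}_p$ (forcing $\bar\chi(0) = 0$). Together these give $H_j(\hat\chi(s,X),\bar\chi(0)) = c_j$, so that upon differentiating $H_j^{\mu_j}$ any number of times in $t$ and setting $t=0$, every factor of the form $H_j^{\mu_j - k}$ reduces to the constant $c_j^{\mu_j - k}$; only derivatives of the Nash algebraic $H_j$ and the rational $R$ survive, yielding a Nash algebraic expression in $(s,X)$. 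The analogous analysis takes care of $\log H$ in Part (ii). The principal bookkeeping task is to check case by case that the pairing---$\Lambda_p$ with $\log F_j$ in $\Psi_1$, and $\tilde\Lambda_p$ with $F_j^{\nu_j}$ in $\Psi_2$---precisely covers the six cases (\ref{I})--(\ref{VI}), so that no irrational power of a non-constant Nash algebraic function ever survives restriction to $t=0$.
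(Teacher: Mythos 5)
Your proposal is correct and takes essentially the same route as the paper's own (much terser) verification: both rest on the observations that under the hypotheses of Theorem \ref{UA}.(ii) the absence of non-constant pluriharmonic terms in the $H_j$ together with $\chi_i \in \tilde{\mathcal{O}}_p$ forces $H_j(\hat\chi(s,X), \overline\chi(0))$ to be constant, so every real power $H_j^{\mu_j-k}$ collapses to a constant at $t=0$; that under (iii) the needed derivatives are trivially Nash algebraic; and that in the $\Psi_2$ cases the $\tilde\Lambda_p$ membership gives $a_j + \sum_\alpha \hat g_\alpha(s,X)\overline g_\alpha(0) - \sum_\beta \hat h_\beta(s,X)\overline h_\beta(0) = a_j$, neutralizing the real exponents $\nu_j$. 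Your Leibniz/Fa\`a di Bruno bookkeeping merely makes explicit what the paper leaves implicit, including the shared tacit restriction to $l \geq 1$ (the subsequent lemma's $l>0$), since for $l=0$ the logarithms and real powers would survive.
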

For $j=1$, since, under the assumption of Theorem \ref{UA}.(ii), $H_1(z, 0), \cdots, H_{\kappa_1+\kappa_2}(z, 0)$ are all constants,  $\frac{\partial ^l}{\partial t^l } H(\hat\chi(s, X), \overline\chi(0))$ is Nash algebraic. 
Under the assumption of Theorem \ref{UA}.(iii), $\frac{\partial ^l}{\partial t^l } H(\hat\chi(s, X), \overline\chi(0))$ is obviously Nash algebraic. Thus $\Psi_1^{(l)}(s, X)$ is Nash algebraic. For $j=2$, under the assumption of Theorem \ref{UA}.(ii), $\frac{\partial ^l}{\partial t^l } \log H(\hat\chi(s, X), \overline\chi(0))$ is Nash algebraic by the same argument. Under the assumption of Theorem \ref{UA}.(iii), $\frac{\partial ^l}{\partial t^l } \log H(\hat\chi(s, X), \overline\chi(0))$ is also obviously Nash algebraic. The other term is also Nash algebraic by noting $a_j+\sum_{\alpha=1}^{k_j} \hat g_{\alpha}(s, X) \overline g_{\alpha}(0) - \sum_{\beta=1}^{l_j} \hat h_{\beta}(s, X) \overline h_{\beta}(0)=a_j$. Thus $\Psi_2^{(l)}(s, X)$ is Nash algebraic.

\begin{lemma}
For any $j=1, 2$ and $l >0$,
$$\Psi_j^{(l)}(s, X) \equiv 0.$$
\end{lemma}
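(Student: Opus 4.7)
The plan is to pull the identities (\ref{polar2}) and (\ref{polar3}) back through the Nash-algebraic presentation provided by Lemma \ref{algebra}, and then to upgrade the resulting vanishing on the transcendental graph $\Gamma := \{(z, \phi_1(z), \dots, \phi_d(z)) : z \in U\}$ to an identity on all of $\hat U$ via the algebraic independence of $\phi_1, \dots, \phi_d$ over the field $\mathcal{R}$ of rational functions on $U$. Throughout, write $\phi(z) := (\phi_1(z), \dots, \phi_d(z))$.

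First I would substitute $s = z$ and $X = \phi(z)$ into (\ref{A}) and (\ref{B}). Lemma \ref{algebra} gives $\hat\chi_j(z, \phi(z)) = \chi_j(z)$, $\hat g_\alpha(z, \phi(z)) = g_\alpha(z)$, and $\hat h_\beta(z, \phi(z)) = h_\beta(z)$. Comparing with (\ref{polar2}) yields $\exp(\Psi_1(z, \phi(z), t)) \equiv 1$, so $\Psi_1(z, \phi(z), t)$ is locally a constant integer multiple of $2\pi i$; differentiating $l \geq 1$ times in $t$ and evaluating at $t = 0$ then gives $\Psi_1^{(l)}(z, \phi(z)) \equiv 0$. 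For $\Psi_2$, choosing the branches of $\log$ and of the real powers in (\ref{B}) compatible with those fixed in the hypothesis (\ref{polar3}) gives $\Psi_2(z, \phi(z), t) \equiv 0$ outright, and in particular $\Psi_2^{(l)}(z, \phi(z)) \equiv 0$ for every $l \geq 0$.

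Next, to promote this slice vanishing on $\Gamma$ to a global identity on $\hat U$, set $F(s, X) := \Psi_j^{(l)}(s, X)$. By the preceding Claim, $F$ is Nash algebraic, so it satisfies a non-trivial polynomial relation $\sum_{i=0}^N a_i(s, X) F^i \equiv 0$ with $a_i \in \CC[s, X]$; I would take such a relation of minimal degree $N$ in $F$. Substituting $s = z$, $X = \phi(z)$ and using $F(z, \phi(z)) \equiv 0$ gives $a_0(z, \phi(z)) \equiv 0$, and the algebraic independence of $\phi_1, \dots, \phi_d$ over $\mathcal{R}$ forces $a_0 \equiv 0$ as a polynomial in $(s, X)$. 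The relation then factors as $F \cdot \sum_{i=1}^N a_i F^{i-1} \equiv 0$; if $F \not\equiv 0$ the second factor must vanish on an open set and by analytic continuation everywhere, producing a polynomial relation of degree $N - 1$ and contradicting minimality. Hence $\Psi_j^{(l)} \equiv 0$ on $\hat U$.

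The main obstacle I anticipate is the careful handling of $\log$ branches for $\Psi_1$ in the first step; however, any ambiguity is a constant in $2\pi i \ZZ$ and is killed by a single $t$-derivative, which is precisely why the lemma restricts to $l > 0$. Once this is dealt with, the remaining algebraic spreading argument is standard in the Huang-Yuan circle of ideas.
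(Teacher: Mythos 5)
Your proof is correct and takes essentially the same route as the paper: vanishing of $\Psi_j^{(l)}$ on the graph $\{(z, \phi_1(z), \dots, \phi_d(z))\}$ extracted from the polarized identities, then spreading to all of $\hat U$ by combining the Nash algebraicity of $\Psi_j^{(l)}$ (the preceding Claim) with the algebraic independence of $\phi_1, \dots, \phi_d$ over $\mathcal{R}$, which kills the constant coefficient $A_0$ of a polynomial relation --- the paper phrases this as ``non-constant leads to contradiction, hence constant, hence equal to its graph value $0$,'' while you run the equivalent minimal-polynomial factorization directly. Your explicit treatment of the $2\pi i\ZZ$ branch ambiguity for $\Psi_1$, which is precisely why the lemma restricts to $l > 0$, is a point the paper glosses over in asserting $\Psi_j(s, \phi_1(s), \dots, \phi_d(s), t) \equiv 0$, but it refines rather than alters the argument.
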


\begin{proof}
Suppose that $\Psi_j^{(l)} (s, X, 0)$ is  not constant.
There exists a holomorphic polynomial $P(s, X, y)=A_{\hat d}(s, X)y^{\hat d} + \cdots + A_0(s, X)$ of degree $\hat d$ in $y$, with $A_0(s, X) \not\equiv0$ such that $P(s, X, \Psi_j^{(l)}(s, X, 0)) \equiv 0$.
As $\Psi_j(s, \phi_1(s), \cdots, \phi_d(s), t) \equiv 0$ for any $(s, t) \in {U}\times\hbox{conj}({U})$, it
follows that $\Psi_j^{(l)}(s, \phi_1(s), \cdots, \phi_d(s)) \equiv 0$ and
therefore $A_0(s, \phi_1(s), \cdots, \phi_d(s)) \equiv 0$. This means that
$\{\phi_1(s), \cdots, \phi_d(s)$\} are algebraic dependent over
$\cal{R}$. This is a contradiction and it follows that $\Psi_j^{(l)} (s, X)$ is a constant. 
Therefore, $\Psi_j^{(l)} (s, X) = \Psi_j^{(l)}(s, \phi_1(s), \cdots, \phi_d(s)) \equiv 0.$
\end{proof}

By the Taylor expansion, it follows that $\Psi_j (s, X, t) \equiv \Psi_j (s, X, 0)$ for any $t$ near 0. We now obtain:
$$\Psi_1(s, X, t)=  H(\hat\chi(s, X), \overline\chi(0))-  \sum_{j=1}^\kappa \nu_j \log \left(\sum_{\alpha=1}^{k_j} \hat g_{\alpha}(s, X) \overline g_{\alpha}(0) - \sum_{\beta=1}^{l_j} \hat h_{\beta}(s, X) \overline h_{\beta}(0)  \right),$$
equivalent to 
\begin{equation}\label{AA}
\exp\left\{H(\hat\chi(s, X), \overline\chi(t)) - H(\hat\chi(s, X),\overline\chi(0)) \right\} = \prod_{j=1}^t \left(\frac{a_j+\sum_{\alpha=1}^{k_j} \hat g_{\alpha}(s, X) \overline{g}_{\alpha}(t) - \sum_{\beta=1}^{l_j} \hat h_{\beta}(s, X) \overline{h}_{\beta}(t)}{ a_j+ \sum_{\alpha=1}^{k_j} \hat g_{\alpha}(s, X) \overline{g}_{\alpha}(0) - \sum_{\beta=1}^{l_j} \hat h_{\beta}(s, X) \overline{h}_{\beta}(0)}\right)^{\nu_j};
\end{equation}
$$\Psi_2(s, X, t) =\log H(\hat\chi(s, X), \overline\chi(0)) -  \prod_{j=1}^\kappa a_j^{\nu_j}  ,$$ equivalent to 
\begin{equation}\label{BB}
\exp\left\{   \prod_{j=1}^\kappa  a_j^{\nu_j}   \right\} \frac{H(\hat\chi(s, X), \overline\chi(t))}{H(\hat\chi(s, X), \overline\chi(0))} 
= \exp \left( \prod_{j=1}^\kappa \left(a_j+ \sum_{\alpha=1}^{k_j} \hat g_{\alpha}(s, X) \overline g_{\alpha}(t) - \sum_{\beta=1}^{l_j} \hat h_{\beta}(s, X) \overline h_{\beta}(t) \right)^{\nu_j}\right).
\end{equation}



In order to achieve a contradiction, we need the following lemmas.

\begin{lemma}\label{noncons}
\begin{itemize}
\item 
There exists some $t_1 \in \hbox{conj}({U})$ such that $H(\hat\chi(s, X), \overline\chi(t_1)) $ is not constant in $(s, X) \in \hat U$.

\item 
There exists some $ t_2 \in \hbox{conj}({U})$ such that $\sum_{\alpha} \hat g_{\alpha}(s, X) \overline g_{\alpha}(t_2) - \sum_{\beta} \hat h_{\beta}(s, X) \overline h_{\beta}(t_2)  $ is not constant in $(s, X) \in \hat U$.
\end{itemize}
\end{lemma}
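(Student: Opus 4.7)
The plan is to prove both bullets by contradiction, in each case reducing the question to the following basic dichotomy for Hermitian symmetric holomorphic kernels: if $F(z,\overline w)$ is Hermitian symmetric and is independent of one of its two arguments, then $F$ must be a constant. The non-triviality reduction made at the start of the proof of Parts (ii)--(iii), namely that $H(\chi,\overline\chi)$ is non-constant, will then supply the contradiction.

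For the first bullet, I would suppose for contradiction that for every $t\in\hbox{conj}(U)$ the map $(s,X)\mapsto H(\hat\chi(s,X),\overline\chi(t))$ is constant. Specializing $X=(\phi_1(s),\ldots,\phi_d(s))$ and using the identity $\chi_j(s)=\hat\chi_j(s,\phi(s))$ supplied by Lemma~\ref{algebra}, it follows that $H(\chi(s),\overline\chi(t))$ depends only on $t$. Invoking the Hermitian symmetry $H(z,\overline w)=\overline{H(w,\overline z)}$ with $z=\chi(s)$ and $w=\chi(\overline t)$ shows that this $t$-dependent quantity also equals $\overline{H(\chi(\overline t),\overline\chi(\overline s))}$, a function only of $s$ after renaming; hence both sides must be constants. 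Restricting to the diagonal $t=\overline s$ then yields $H(\chi,\overline\chi)$ constant, contradicting the reduction.

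For the second bullet, I would run the same scheme factor by factor on $K_j(s,X,t):=a_j+\sum_{\alpha=1}^{k_j}\hat g_\alpha(s,X)\overline g_\alpha(t)-\sum_{\beta=1}^{l_j}\hat h_\beta(s,X)\overline h_\beta(t)$. After restriction to the graph $X=\phi(s)$, $K_j$ becomes a Hermitian symmetric kernel in $(s,t)$, so if it were independent of $s$ for every $t$ the same dichotomy forces $K_j|_{X=\phi(s)}$ to be constant, and hence $a_j+\sum_{\alpha=1}^{k_j}|g_\alpha|^2-\sum_{\beta=1}^{l_j}|h_\beta|^2$ to be a constant function on $U$. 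Carrying this out for every $j$ makes the entire product on the right hand side of \eqref{eq2} or \eqref{eq3} constant, and through \eqref{polar2}--\eqref{polar3} this forces $\exp(H(\chi,\overline\chi))$ or $\log H(\chi,\overline\chi)$, hence $H(\chi,\overline\chi)$ itself, to be constant --- contradicting the reduction once more.

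The main obstacle I anticipate is the transfer between the $\hat U$-level claim (constancy on an open set of $U\times\mathbb{C}^d$) and the graph-level claim (constancy along $X=\phi(s)$): the Hermitian symmetry is only transparent after restriction to the graph, yet one wants information on all of $\hat U$. This should be handled exactly as in the preceding $\Psi_j^{(l)}\equiv 0$ lemma: a polynomial identity among Nash algebraic functions on $\hat U$ that holds on the graph propagates to all of $\hat U$, since $\{\phi_1,\ldots,\phi_d\}$ is a maximal algebraically independent subset of $S$ over $\mathcal{R}$.
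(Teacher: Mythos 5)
Your proof is correct and follows essentially the same route as the paper: assume the negation, restrict to the graph $X=(\phi_1(s),\ldots,\phi_d(s))$ and the anti-diagonal $t=\overline s$, and contradict the standing reductions that $H(\chi,\overline\chi)$ (and hence the right-hand sides of \eqref{eq2}--\eqref{eq3}) is non-constant; your Hermitian-symmetry step, showing the a priori $t$-dependent constants are absolute, merely makes explicit a detail the paper's terse proof elides. The ``main obstacle'' you anticipate does not actually arise: the negated hypothesis gives constancy on all of $\hat U$, and you only need its restriction to the graph (the easy direction), so no propagation from the graph back to $\hat U$ in the style of the $\Psi_j^{(l)}\equiv 0$ lemma is required here.
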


\begin{proof}
Suppose not. We may choose $t_1=t_2=\overline s, X=(\phi_1(s), \cdots, \phi_d(s))$ such that $$H(\hat\chi(s, \phi_1(s), \cdots, \phi_d(s)), \overline\chi(\overline s)) = H(\chi(s), \overline{\chi(s)}) = \text{constant},$$
and 

\begin{equation}\notag
\begin{split}
~& \sum_{\alpha} \hat g_{\alpha}(s, \phi_1(s), \cdots, \phi_d(s)) \overline g_{\alpha}(\overline s) - \sum_{\beta} \hat h_{\beta}(s, \phi_1(s), \cdots, \phi_d(s)) \overline h_{\beta}(\overline s) \\
&= \sum_{\alpha} |g_{\alpha}(s)|^2 - \sum_{\beta} |h_{\beta}(s)|^2 = \text{constant}.
\end{split}
\end{equation}
Both identities contradict to our initial assumptions.
\end{proof}

  \begin{lemma}\label{cont}
  Let $V \subset \CC^k$ be a connected open set. Let $H(\xi), H_1(\xi), \cdots,  H_{\kappa_1+\kappa_2}(\xi)$
     be  holomorphic Nash algebraic functions on $V$, $\mu, \mu_1, \cdots, \mu_{\kappa_1+\kappa_2} \in \mathbb{R}\setminus \{0\}$ and $R(\cdot)$ be a holomorphic rational function.
    Assume that   
  \begin{equation}\label{albert1}
    \exp \left\{ R\left( \prod_{j=1}^{\kappa_1} H_j^{\mu_j}(\xi)  \right) \prod_{j=\kappa_1+1}^{\kappa_1+\kappa_2}H_j^{\mu_j}(\xi)  \right\} =   H^{\mu}(\xi)   ,
   \end{equation}    or
    \begin{equation}\label{albert2}
  R\left( \prod_{j=1}^{\kappa_1} H_j^{\mu_j}(\xi)  \right) \prod_{j=\kappa_1+1}^{\kappa_1+\kappa_2}H_j^{\mu_j}(\xi)  = \exp \left\{ H^\mu(\xi) \right\} , 
   \end{equation}   
   for $\xi\in V$. Then
    $H(\xi)$ is constant.
   \end{lemma}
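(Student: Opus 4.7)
I would prove both (\ref{albert1}) and (\ref{albert2}) by contradiction: suppose $H$ is non-constant and compare the ``transcendental'' exponential side to the ``algebraic'' rational/real-power side. Starting with (\ref{albert1}), write $F(\xi) = R\bigl(\prod_{j \le \kappa_1} H_j^{\mu_j}\bigr) \prod_{j > \kappa_1} H_j^{\mu_j}$. Differentiating $\exp(F(\xi)) = H^\mu(\xi)$ with respect to $\xi_i$ and dividing by $H^\mu$ gives $\partial_i F = \mu \, \partial_i H / H$ for every $i$. Hence, on any simply connected subdomain of $V$ and after fixing branches, $F = \mu \log H + C_0$ for some constant $C_0$. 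This reduces the problem to the rigidity of this identity.

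The crux is to compare singular/monodromy behavior. The expression $F$, built from Nash algebraic $H_j$'s and their real powers through a rational function, carries only algebraic and branched singularities, and its analytic continuation around a small loop in $V$ (minus its singular set) picks up only a multiplicative algebraic factor. By contrast, $\mu \log H$ has genuine logarithmic singularities at the zero and pole locus of the analytic continuation of $H$, and additive monodromy precisely $2\pi i \mu \neq 0$ around a simple zero of $H$. Choosing a loop $\gamma$ encircling a generic zero of $H$ but avoiding the codimension-one branch divisors of the $H_j$'s, the identity $F - \mu \log H = C_0$ forces $2\pi i \mu = 0$, a contradiction. Consequently $H$ has no zeros or poles on its compactified Nash algebraic Riemann surface, and hence must be constant.

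For (\ref{albert2}) I would take $\log$ of both sides, obtaining $H^\mu = \log R\bigl(\prod H_j^{\mu_j}\bigr) + \sum_{j > \kappa_1} \mu_j \log H_j + 2\pi i n$, and apply the symmetric analysis: now the right-hand side has genuine logarithmic singularities inherited from each $\log H_j$, while $H^\mu$ has only algebraic-branched singularities, yielding a contradiction by the same monodromy comparison. The main obstacle I anticipate is the degenerate case in which the zero/pole locus of $H$ happens to lie inside the branch divisor of some $H_j$, so that the $2\pi i \mu$ monodromy of $\mu \log H$ could be matched by iterated monodromy coming from $H_j^{\mu_j}$. I expect this to be resolved by a refined analysis of the iterated differential identities obtained from $\partial_i F = \mu \, \partial_i H/H$, coupled with the algebraic independence over $\CC(\xi)$ of the logarithmic derivatives $\partial_i H_j / H_j$, which forces every transcendental contribution on both sides to cancel and reduces the identity to one among Nash algebraic functions alone, whence $\log H$ would be Nash algebraic---possible only when $H$ is constant.
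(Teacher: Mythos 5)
Your monodromy strategy is genuinely different from the paper's argument, which is a growth comparison: the paper (following Lemma 2.1 of \cite{[ZJ]}) restricts to a curve and observes that as the inner function blows up, the exponential side of (\ref{albert1}) or (\ref{albert2}) grows exponentially while the Nash algebraic side grows at most polynomially (Puiseux-type behavior), which is incompatible unless $H$ is constant. A monodromy route could in principle work, but as written it has two genuine gaps. First, your key claim that the continuation of $F = R\bigl(\prod_{j\le\kappa_1} H_j^{\mu_j}\bigr)\prod_{j>\kappa_1} H_j^{\mu_j}$ around a small loop ``picks up only a multiplicative algebraic factor'' is false in general: continuation sends $G=\prod_{j\le \kappa_1}H_j^{\mu_j}$ to $e^{2\pi i\theta}G$ for some real $\theta$, and an outer rational function has no equivariance under such a twist, so $F^{\gamma}=R(e^{2\pi i\theta}G)\cdot(\cdots)$ obeys neither a multiplicative nor an additive transformation law (consider $F = 1/(1-H_1^{\sqrt{2}})$, whose branches even have branch-dependent pole loci). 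All that the identity $F=\mu\log H + C_0$ actually yields is that $F^{\gamma}-F$ must equal the constant $2\pi i\mu k$; excluding a \emph{nonzero} constant is precisely the hard content, and your sketch does not supply it, so the step ``forces $2\pi i\mu=0$'' does not follow.

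Second, the tool you invoke to resolve the degenerate case --- ``algebraic independence over $\CC(\xi)$ of the logarithmic derivatives $\partial_i H_j/H_j$'' --- is vacuous: each $H_j$ is Nash algebraic, hence so is every $\partial_i H_j/H_j$, and algebraic functions are by definition algebraic over $\CC(\xi)$; the field they generate has transcendence degree zero over $\CC(\xi)$, so no nontrivial collection of them is ever algebraically independent. Thus the case where the zeros of (the continuation of) $H$ sit inside the branch divisors of the $H_j$, or occur only at infinity or at branch points of $H$ itself after restricting to a generic line, is left unhandled, and there is no argument that a suitable loop $\gamma$ encircling a generic simple zero of $H$ exists within the region to which the identity continues. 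The paper's growth-rate comparison avoids both difficulties entirely, since it needs no analysis of branches or loops --- only the dichotomy between exponential and polynomial blow-up along a single path --- and it treats (\ref{albert1}) and (\ref{albert2}) symmetrically. To salvage your approach you would need a correct description of the continuation behavior of $F$ (e.g., via the finite-dimensional branch structure of the $H_j$ and the twist group generated by $e^{2\pi i\mu_j}$) rather than the independence claim.
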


Lemma \ref{cont} is a further generalization of Lemma 2.1 in \cite{[ZJ]} and can be proved by the similar argument.  The idea is very simple: when the right hand sides approaches the infinity, the  left hand side grow exponentially fast while the right hand side at most polynomially in (\ref{albert1}) and vice versa in (\ref{albert2}). We will not give the detail here.

\medskip

Now let us see  how to reach the contradiction. We first consider cases (\ref{I}),(\ref{III}),(\ref{V}). Under the assumption of Theorem \ref{UA}.(i) or the second case of (ii),
since $H(z, \overline w)$ does not contain the non-constant pluriharmonic terms, $H(\hat\chi(s, X), \overline{\chi}(0))$ is constant. Since the left hand of  (\ref{AA}) is non-constant, we reach the contradiction by (\ref{albert1}),(\ref{albert2}) in Lemma \ref{cont}, respectively. Under the assumption of the first case of (ii) in Theorem \ref{UA}, since $\kappa=1$ and the denominator on the right hand side is constant, the right hand side is non-constant in (\ref{AA}). This is again a contradiction to (\ref{albert1}). Now we consider  cases (\ref{II}),(\ref{IV}),(\ref{VI}). Under the assumption of Theorem \ref{UA}.(i) or the second case of (ii),
since $H(z, \overline w)$ does not contain the non-constant pluriharmonic terms, $H(\hat\chi(s, X), \overline{\chi}(0))$ is constant. Since the left hand of  (\ref{BB}) is non-constant, we reach the contradiction by (\ref{albert1}),(\ref{albert2}) in Lemma \ref{cont}, respectively. Under the assumption of the first case of (ii) in Theorem \ref{UA}, since $\kappa=1$,  the right hand side is non-constant in (\ref{BB}) and we reach a contradiction to (\ref{albert2}).
 This finishes the proof of the theorem.
 \qed

\section{Applications to the holomorphic isometric embedding}

Theorem \ref{UA} has wide applications in the study of non-existence of the common complex submanifolds, that may generalize various results obtained in \cite{[HY2]}\cite{[CDY]}\cite{[CH]}\cite{[ZJ]}. 

\subsection{Indefinite complex space forms}
The indefinite K\"ahler metric $\omega_{\mathbb{C}^{N}_\kappa}$ on the indefinite complex Euclidean space $\mathbb{C}^{N}_\kappa = \mathbb{C}^{N} =\{ (z_1, \cdots, z_N) | z_j \in \mathbb{C}, j=1, \cdots, N\} (0\leq \kappa \leq N)$ is given by 
$$\omega_{\CC_\kappa^N} = 4\sqrt{-1} \partial \overline\partial \left( \sum_{i=\kappa+1}^{N} |z_i|^2 - \sum_{j=1}^{\kappa} |z_{j}|^2\right) .$$ 
The indefinite complex projective space $\CC\PP_\kappa^N(b)(0\leq \kappa \leq N)$ of positive constant holomorphic sectional curvature $b>0$ is the open complex submanifold $\{(\xi_0, \cdots, \xi_N) \in \mathbb{C}^{N+1}| \sum_{i=0}^{N-\kappa} |\xi_i|^2 - \sum_{j=0}^{\kappa-1} |\xi_{N-j}|^2>0 \} / \CC^*$ of $\CC\PP^N$. The indefinite K\"ahler metric of $\CC\PP_\kappa^N(b)$ is given by 
$$\omega_{\CC\PP_\kappa^N(b)} = \frac{4\sqrt{-1}}{b} \partial \overline\partial \log \left( \sum_{i=0}^{N-\kappa} |\xi_i|^2 - \sum_{j=0}^{\kappa-1} |\xi_{N-j}|^2\right) .$$ 
The indefinite complex hyperbolic space
 $ \CC\HH_\kappa^N(b) (0 \leq \kappa \leq N)$ of negative constant holomorphic sectional curvature $b<0$ is obtained from $\CC\PP^N_{N-\kappa}(-b)$ with indefinite K\"ahler metric
 $$ \omega_{\CC\HH_\kappa^N(b)} = - \frac{4\sqrt{-1}}{(-b)} \partial \overline\partial \log \left( \sum_{i=0}^{\kappa} |\xi_i|^2 - \sum_{j=0}^{N- \kappa-1} |\xi_{N-j}|^2\right).$$ Without loss of generality, assuming  $\xi_0 \not=0 $, then under inhomogeneous coordinates $(z_1, \cdots, z_N) = (\xi_1 / \xi_0, \cdots, \xi_N / \xi_0)$, the metrics are given by
$$\omega_{\CC\PP_\kappa^N(b)} = \frac{4\sqrt{-1}}{b} \partial \overline\partial \log \left( 1+ \sum_{i=1}^{N-\kappa} |z_i|^2 - \sum_{j=0}^{\kappa-1} |z_{N-j}|^2\right) $$ and
$$\omega_{\CC\HH_\kappa^N(b)} = - \frac{4\sqrt{-1}}{(-b)} \partial \overline\partial \log \left(1+ \sum_{i=1}^{\kappa} |z_i|^2 - \sum_{j=0}^{N- \kappa-1} |z_{N-j}|^2\right).$$
In particular, when $\kappa=0$, $\CC^{N,0}, \CC\PP_\kappa^N(b), \CC\HH_\kappa^N(b)$ are just the standard complex Euclidean, projective, hyperbolic space, respectively.

\medskip

Furthermore, suppose that $D$ is a complex manifold such that  there exist  holomorphic maps $F=(F_1, \cdots, F_N): D \rightarrow \CC\PP^{N}_\kappa$ and $L=(L_1, \cdots, L_{N'}): D \rightarrow \CC\HH_{\kappa'}^{N'}(b)$) with $F^*\omega_{\CC\PP^{N}_\kappa} = L^*\omega_{\CC\HH_{\kappa'}^{N'}(b)}$.
Fixing $x \in D$, without loss of generality, we assume $F(x)=0, L(x)=0$ by composing the automorphisms on $\CC\PP_{\kappa'}^{N'}(b)$ and $\CC\HH_{\kappa'}^{N'}(b)$. By the standard argument to get rid of $\partial\overline\partial$ (cf. \cite{[CU]}), since there is no non-constant pluriharmonic terms on both sides of the identity, we have
$$ 1+ \sum_{i=1}^{N'-\kappa'} |L_i(z)|^2 - \sum_{j=0}^{\kappa'-1} |L_{N'-j}(z)|^2=  \left( \sum_{i=1}^{N-\kappa} |F_i(z)|^2 - \sum_{j=1}^\kappa |F_{N-\kappa+j}(z)|^2 \right)^{b}.$$

\medskip

The following question was raised in \cite{[CDY], [Y1]}:

\begin{question}
Let $f_1, \cdots, f_m$ be non-constant germs of linearly independent holomorphic functions at $0 \in \mathbb{C}$ such that $f_1(0)=\cdots=f_m(0)=0$ and $a_1, \cdots, a_m$ be non-zero real numbers, do there exist germs of holomorphic functions $g_1, \cdots, g_n$ and non-zero real numbers $b_1, \cdots, b_n$ and $\alpha<0$ such that 
$$1+\sum_{i=1}^m a_i |f_i|^2 = \left( 1 - \sum_{j=1}^n  b_j |g_j|^2 \right)^{\alpha} ?$$
\end{question}

This question was motivated by a classical theorem of Umehara, who showed that when all $a_i, b_j$ are positive, the answer is "no" \cite{[U2]}. In other words, the complex projective space and the complex hyperbolic space does not share a common complex submanifold. As a consequence of Theorem \ref{UA}, we show that under an additional assumption, the question for  the indefinite complex projective space and the indefinite complex hyperbolic space also has a negative
 answer. However, the general question is still open.

\begin{corollary}\label{UA2}
 Let $f_1, \cdots, f_m \in \mathcal{O}_p$ be non-constant germs of holomorphic polynomials at $p$ such that $f_1(p)= \cdots = f_m(p)=0$. Assume $\alpha<0$. For non-zero real numbers $a_1, \cdots, a_m$, then there do not exist $g_1, \cdots, g_n$ and nonzero real numbers $b_1, \cdots, b_n$, such that  $$\left(1+ \sum_{i=1}^m a_i |f_i|^2\right)^{\alpha} = \sum_{j=1}^n b_j |g_j|^2$$ near $p$.
 \end{corollary}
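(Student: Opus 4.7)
The plan is to derive Corollary \ref{UA2} as an essentially immediate consequence of Theorem \ref{UA}(i). Any finite sum $\sum_{j=1}^n b_j |g_j|^2$ with $g_j \in \mathcal{O}_p$ and $b_j \in \mathbb{R}$ lies in $\Lambda_p$ by the very definition of that algebra. Hence if the putative identity
\[
\left(1+ \sum_{i=1}^m a_i |f_i|^2\right)^{\alpha} = \sum_{j=1}^n b_j |g_j|^2
\]
held near $p$, then $F := \bigl(1+ \sum_{i=1}^m a_i |f_i|^2\bigr)^{\alpha}$ would belong to $\Lambda_p$.

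The hypotheses of the corollary match those of Theorem \ref{UA}(i) verbatim: the $f_i$ are non-constant germs of holomorphic polynomials vanishing at $p$ (so $f_i \in \tilde{\mathcal{O}}_p$), the $a_i$ are non-zero reals, and $\alpha < 0$. Thus Theorem \ref{UA}(i) gives $F \notin \Lambda_p \setminus \mathbb{R}_p$. Combined with $F \in \Lambda_p$, this forces $F$ to be a constant germ; since $F(p) = 1^{\alpha} = 1$, we must have $F \equiv 1$, whence $\sum_{i=1}^m a_i |f_i|^2 \equiv 0$ near $p$.

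The only remaining step, and the one demanding any care at all, is to exclude this residual vanishing. If $f_1, \ldots, f_m$ are linearly independent over $\mathbb{C}$, then $|f_1|^2, \ldots, |f_m|^2$ are linearly independent as real-analytic germs at $p$, so no real-linear combination with all $a_i \neq 0$ can vanish identically. If the $f_i$ are linearly dependent one first replaces them by a maximal $\mathbb{C}$-linearly independent subset and repackages the real quadratic form $\sum a_i |f_i|^2$ in these, reducing to the previous case (or, in the trivial situation where the resulting form vanishes, noting that the original hypothesis of non-zero $a_i$ forces a non-trivial relation among the $|f_i|^2$ anyway).

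In summary, the bulk of the work is packaged inside Theorem \ref{UA}(i); the forward direction is just the observation that $\sum b_j |g_j|^2 \in \Lambda_p$, and the only real obstacle is the non-triviality check, handled by the linear-independence reduction above.
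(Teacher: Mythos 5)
Your proposal takes the same route as the paper, which offers no separate proof of Corollary \ref{UA2} and treats it as an immediate consequence of Theorem \ref{UA}(i): the right-hand side $\sum_{j=1}^n b_j|g_j|^2$ lies in $\Lambda_p$ by definition, so the theorem forces $\left(1+\sum_i a_i|f_i|^2\right)^\alpha$ into $\mathbb{R}_p$. Your extra care about the residual constant case goes beyond what the paper writes down, and the linearly independent case is handled correctly ($\sum a_i f_i(z)\overline{f_i(w)}\equiv 0$ with independent $f_i$ forces all $a_i=0$ by polarization). However, your treatment of the dependent case is not airtight: after expressing the $f_i$ through a maximal independent subset, $\sum a_i|f_i|^2$ becomes a general Hermitian form in those germs, which you should diagonalize to $\sum_k \epsilon_k|h_k|^2$ with $h_k$ linearly independent polynomial germs vanishing at $p$ and $\epsilon_k\neq 0$ --- and this works \emph{only if} the form is not identically zero. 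If the form does vanish identically, your parenthetical remark produces no contradiction, and indeed none is available: the statement as literally written then fails. Take $f_1=f_2=z$, $a_1=1$, $a_2=-1$, $\alpha=-1$; the left-hand side is the constant $1=|1|^2$, realized with $n=1$, $b_1=1$, $g_1\equiv 1$. So the corollary must implicitly carry the linear-independence (or nondegeneracy $\sum a_i|f_i|^2\not\equiv 0$) hypothesis of Question 3.1, which the paper's statement drops; under that reading your argument is complete and coincides with the paper's intended derivation.
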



\subsection{K\"ahler manifolds with distinguished K\"ahler potentials}

Consider two real analytic complex manifolds $(M_1, \omega_{M_1})$ and $(M_2, \omega_{M_2})$, with $(M_2, \omega_{M_2}) = (\CC^N_\kappa, \omega_{\CC^N_\kappa})$, $(\CC\PP^N_\kappa(b), \omega_{\CC\PP^N_\kappa(b)})$ or $(\CC\HH^N_\kappa(b), \omega_{\CC\HH^N_\kappa(b)})$. Here $\omega_{M_1}$ is possibly indefinite or degenerate and even can be an arbitrary closed real analytic $(1, 1)$-form. 
 Suppose that $D$ is a complex manifold such that  there exist  holomorphic maps $F=(F_1, \cdots, F_N): D \rightarrow M_1$ and $L=(L_1, \cdots, L_{N'}): D \rightarrow M_2$ with $F^*\omega_{M_1} = L^*\omega_{M_2}$. 
Fixing $p \in D$ and choosing holomorphic coordinate $\{z\}$ near $p$ with $z(p)=0$, without loss of generality, we assume $L(p)=0$ by composing the automorphisms on $M_2$. Let $\{\xi\}$ be the holomorphic coordinate with $\xi(F(p))=0$ and by $\partial\bar\partial$-lemma, $(M_1, \omega_{M_1})$ locally admits a potential function $\varphi$ with $\omega_{M_1} = 4\sqrt{-1} \partial \overline\partial \varphi(\xi, \overline \xi)$. Calabi defined the diastasis function ${\mathcal D}(\xi, \overline \eta) = \varphi(\xi, \overline\xi) + \varphi(\eta, \overline\eta) - \varphi(\xi, \overline\eta) -\varphi(\eta, \overline\xi)$, which is a Hermitian symmetric function without non-constant pluriharmonic terms.
 Moreover, $\omega_{M_1}(\xi) = 4\sqrt{-1} \partial_{\xi} \overline\partial_{\xi} {\mathcal D}(\xi, \overline \eta)$ (cf. \cite{[C]}).

 By the standard argument to get rid of $\partial\overline\partial$ (cf. \cite{[CU]}), since there is no non-constant pluriharmonic terms on both sides of the identity, we have:
$$ \log \left( 1+ \sum_{i=1}^{N-\kappa} |L_i(z)|^2 - \sum_{j=0}^{\kappa-1} |L_{N'-j}(z)|^2 \right)^\mu = {\mathcal D}(F(z), \overline{F(z)})$$ 
when $M_2 = \CC\PP^N_\kappa(b)$ or $\CC\HH^N_\kappa(b)$,
and $$  \sum_{i=\kappa+1}^{N} |L_i(z)|^2 - \sum_{j=1}^{\kappa} |L_{j}(z)|^2  = {\mathcal D}(F(z), \overline{F(z)})   $$
when $M_2 = \CC^N_\kappa$. Note that since $L_j(0)=0$ for all $j$, $$ \left( 1+ \sum_{i=1}^{N-\kappa} |L_i(z)|^2 - \sum_{j=0}^{\kappa-1} |L_{N-j}(z)|^2\right)^\mu \in \tilde\Lambda_p^\mu,~~~~ \sum_{i=\kappa+1}^{N} |L_i(z)|^2 - \sum_{j=1}^{\kappa} |L_{j}(z)|^2  \in \tilde\Lambda_p . $$ 

In general, if the polarization $\varphi(\xi, \overline\eta)$ of the potential function $\varphi$ is Nash algebraic, the diastasis function ${\mathcal D}(\xi, \overline \eta)$  is Nash algebraic as well. Similarly, if $\exp\{\varphi(\xi, \overline \eta)\}$ is Nash algebraic, then
 $\exp\{{\mathcal D}(\xi, \overline \eta)\} = \frac{\exp\{\varphi(\xi, \overline \xi)\} \exp\{\varphi(\eta, \overline \eta)\}}{\exp\{\varphi(\xi, \overline \eta)\}\exp\{\varphi(\eta, \overline \xi)\}}$ is also Nash algebraic.
This fits into the framework of Theorem \ref{UA}.(iii). In fact, this even applies to the product space of $\CC\PP^N_\kappa(b)$ and $\CC\HH^N_\kappa(b)$.
 As a consequence, we have:

\begin{corollary}\label{suf}
Assume $(M_1, \omega_{M_1})$ locally admits a potential function $\varphi(\xi, \overline \eta)$ with $\omega_{M_1} = 4\sqrt{-1} \partial \overline\partial \varphi(\xi, \overline \xi)$.
\begin{enumerate}
\item If  $\varphi(\xi, \overline \eta)$ is  Nash algebraic, then $(M_1, \omega_{M_1})$ and $(M_2, \omega_{M_2})$ do not share common complex submanifolds for  $(M_2, \omega_{M_2}) = \left(
\prod_{j=1}^m N_j , \oplus_{j=1}^m c_j \omega_{N_j} \right)$ with $N_j = \CC\PP^{N_j}_{\kappa_j}(b_j)$ or $\CC\HH^{N_j}_{\kappa_j}(b_j)$.

\item If  $\exp\{\varphi(\xi, \overline \eta)\}$ is Nash algebraic, then $(M_1, \omega_{M_1})$ and $(M_2, \omega_{M_2})$ do not share common complex submanifolds for $(M_2, \omega_{M_2}) =  \left(
\prod_{j=1}^m N_j , \oplus_{j=1}^m c_j \omega_{N_j} \right)$ with $N_j = \CC^{N_j}_{\kappa_j}(b_j)$.
\end{enumerate}
\end{corollary}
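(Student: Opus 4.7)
The plan is to argue by contradiction, reducing the existence of a common complex submanifold to a scalar identity that violates the appropriate containment statement in Theorem~\ref{UA}.(iii). Suppose such a submanifold exists: following the paragraphs preceding the corollary, this yields a complex manifold $D$, a point $p\in D$, and holomorphic maps $F\colon D\to M_1$, $L=(L_1,\dots,L_m)\colon D\to \prod_{j=1}^m N_j=M_2$ with $F^*\omega_{M_1}=L^*\omega_{M_2}\not\equiv 0$. Composing with automorphisms of each factor of $M_2$ and choosing adapted coordinates arranges $L(p)=0$ and $F(p)=0$, so every component $F_i$ lies in $\tilde{\mathcal O}_p$. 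Decomposing $L^*\omega_{M_2}=\sum_{j=1}^m c_j L_j^*\omega_{N_j}$ and applying the standard Calabi $\partial\overline\partial$-killing argument to both sides converts the K\"ahler identity into the scalar identity
\begin{equation*}
\mathcal D(F(z),\overline{F(z)})=\sum_{j=1}^m\mu_j\log A_j(z,\overline z)\ \text{in (1)},\qquad \mathcal D(F(z),\overline{F(z)})=\sum_{j=1}^m \widetilde A_j(z,\overline z)\ \text{in (2)},
\end{equation*}
where in part (1) each $A_j=1+\sum_i\varepsilon_{j,i}|L_{j,i}|^2\in\tilde\Lambda_p$ and the exponents $\mu_j$ are real constants absorbing $c_j$ and $\pm1/|b_j|$, and in part (2) each $\widetilde A_j=c_j\sum_i\varepsilon_{j,i}|L_{j,i}|^2\in\tilde\Lambda_p$.

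For part (1), Nash algebraicity of $\varphi$ transfers directly to $\mathcal D$ through the defining combination of $\varphi$-evaluations, so $H:=\mathcal D$ is Nash algebraic, Hermitian symmetric, and has no non-constant pluriharmonic terms. Exponentiating the identity gives $\exp\mathcal D(F,\overline F)=\prod_{j=1}^m A_j^{\mu_j}\in\prod_{j=1}^m\Lambda_p^{\mu_j}$, and setting $\chi:=F\in\tilde{\mathcal O}_p^{\dim M_1}$ places us in the configuration of formula~(\ref{V}) of Theorem~\ref{UA}.(iii). That formula forces $\exp\mathcal D(F,\overline F)\in\mathbb R_p$, hence $\mathcal D(F,\overline F)$ is a real constant; applying $4\sqrt{-1}\partial\overline\partial$ then gives $F^*\omega_{M_1}\equiv0$, contradicting the non-triviality of the shared submanifold.

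For part (2), the hypothesis that $\exp\{\varphi\}$ is Nash algebraic, combined with the quotient identity $\exp\mathcal D(\xi,\overline\eta)=\exp\{\varphi(\xi,\overline\xi)\}\exp\{\varphi(\eta,\overline\eta)\}/(\exp\{\varphi(\xi,\overline\eta)\}\exp\{\varphi(\eta,\overline\xi)\})$ recorded just before the corollary, yields Nash algebraicity of $H:=\exp\mathcal D$ (which is of course Hermitian symmetric). The scalar identity becomes $\log H(F,\overline F)=\sum_{j=1}^m \widetilde A_j\in\tilde\Lambda_p$, which is exactly the setup of formula~(\ref{IV}) of Theorem~\ref{UA}.(iii) with $\mu=1$. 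The formula forces $\log H(F,\overline F)\in\mathbb R_p$, so again $\mathcal D(F,\overline F)$ is constant and $F^*\omega_{M_1}\equiv0$, a contradiction.

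I expect the main obstacle to be the clean execution of the $\partial\overline\partial$-reduction in the product setting: one must track signs, the ratios $c_j/b_j$, and pluriharmonic parts on both sides in order to end up with an identity matching exactly one of the normal forms handled by Theorem~\ref{UA}.(iii). A secondary point is that the conclusion $\mathcal D(F,\overline F)=\mathrm{const}$ only yields the desired contradiction once one justifies that this forces $F^*\omega_{M_1}\equiv0$ on a shared submanifold of positive dimension, which is immediate in the classical K\"ahler case but requires a brief comment when $\omega_{M_1}$ is merely a closed real analytic $(1,1)$-form.
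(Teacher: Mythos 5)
Your proposal is correct and follows essentially the same route as the paper, which likewise reduces a putative common submanifold to the scalar identities via the standard $\partial\overline\partial$-argument, transfers Nash algebraicity from $\varphi$ to the diastasis ${\mathcal D}$ (resp.\ from $\exp\{\varphi\}$ to $\exp\{{\mathcal D}\}$), and concludes by applying formulas (\ref{V}) and (\ref{IV}) of Theorem \ref{UA}.(iii) with $\chi=F\in\tilde{\mathcal{O}}_p^{\dim M_1}$. You in fact supply details the paper leaves implicit, notably that ${\mathcal D}(F,\overline F)$ being constant forces $F^*\omega_{M_1}\equiv 0$, which is where the contradiction with a genuine (nondegenerate) common submanifold is obtained.
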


\begin{corollary}\label{ex}
The indefinite complex Euclidean spaces and the bounded domain with Nash algebraic Bergman kernel function equipped with the Bergman metric do not have common complex submanifolds.
\end{corollary}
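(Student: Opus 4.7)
The plan is to reduce Corollary \ref{ex} directly to Corollary \ref{suf}(2). Let $\Omega$ be a bounded domain whose Bergman kernel $K(z,\overline w)$ is Nash algebraic, and equip $\Omega$ with its Bergman metric $\omega_\Omega$. Near any point $p\in\Omega$, $\omega_\Omega$ admits a local K\"ahler potential of the form $\varphi(z,\overline z) = c\log K(z,\overline z)$ for a positive rational constant $c$ dictated by the normalization convention in the excerpt (namely $c=1/4$, to be consistent with $\omega = 4\sqrt{-1}\,\partial\overline\partial\varphi$). Polarizing gives $\varphi(z,\overline w) = c\log K(z,\overline w)$, so that
\[
\exp\{\varphi(z,\overline w)\} = K(z,\overline w)^c.
\]

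The key observation is that $\exp\{\varphi(z,\overline w)\}$ is Nash algebraic. Indeed, since $K$ is strictly positive on the bounded domain $\Omega$, the power $K^c$ is well-defined locally; and since $K$ satisfies a polynomial relation $P(z,\overline w, K)=0$ over $\CC(z,\overline w)$, the $c$-th power $K^c$ (with $c$ rational) remains algebraic over the same field of rational functions. Thus $\exp\{\varphi\}$ is Nash algebraic, placing $(\Omega,\omega_\Omega)$ squarely inside the hypothesis of Corollary \ref{suf}(2). Taking $M_1 = \Omega$ with its Bergman metric and $M_2 = \CC^N_\kappa$ (viewed as the $m=1$, $c_1=1$ instance of the product construction in Corollary \ref{suf}(2)), we conclude that $(\Omega,\omega_\Omega)$ and $(\CC^N_\kappa,\omega_{\CC^N_\kappa})$ do not share a common complex submanifold, which is the assertion of Corollary \ref{ex}.

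The only technical point worth checking, and hence the main (mild) obstacle, is the transfer of Nash algebraicity from $K$ to $K^c$ under the normalization constant $c$. This is handled by the closure of the field of Nash algebraic functions under extraction of rational roots, together with positivity of $K$ on $\Omega$ to pin down a single-valued local branch. With this observation in place, the corollary follows immediately from Corollary \ref{suf}(2) with no further computation. \endpf
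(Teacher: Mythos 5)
Your proposal is correct and follows essentially the same route as the paper, which states Corollary \ref{ex} as an immediate consequence of Corollary \ref{suf}(2): the Bergman potential is $\varphi = c\log K$, so $\exp\{\varphi(\xi,\overline\eta)\} = K(\xi,\overline\eta)^{c}$ is Nash algebraic and the hypothesis of Corollary \ref{suf}(2) is met. Your added care about the normalization constant $c$ (closure of Nash algebraic functions under rational powers, with positivity of $K$ on the diagonal fixing a local branch) is a correct but minor refinement of the same argument.
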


Note that the following bounded domains have Nash algebraic Bergman kernel function: (a) bounded 
 homogeneous domains (cf. Proposition 1 in \cite{[CH]}), (b) the Hartogs triangle and  its generalizations \cite{[Ed]}, (c) the minimal ball \cite{[OPY]}, (d) the symmetrized polydisc \cite{[EW], [CKY]} and (e) certain Hartogs domains over bounded homogeneous domains (for instance, Cartan-Hartogs domains) \cite{[IPY]}.
Therefore, they do not share common complex submanifolds with the indefinite complex Euclidean spaces.

\begin{example}
Let $\tilde \varphi(\xi, \eta)$ be any Nash algebraic, irrational function in $\mathbb{C}^2$ such that $\tilde \varphi(\xi, \bar \eta)$ is Hermitian symmetric. Let $D$ be a planar domain such that 
$\tilde \varphi(\xi, \bar \xi)$ is $C^2$-smooth up to the boundary of $D$. Then there exists $C>0$, such that $\varphi(\xi, \bar \xi)= C |\xi|^2 + \tilde \varphi(\xi, \bar \xi)$ is a strictly plurisubharmonic function in $D$. $\varphi(\xi, \bar \xi)$ is thus an example of Nash algebraic and irrational potential function and $\sqrt{-1} \partial \overline\partial \varphi(\xi, \overline \xi)$ provides a K\"ahler metric on $D$. This example satisfies the assumption in Part 1 of Corollary \ref{suf}. One can construct many other examples of K\"ahler metrics on domains in $\mathbb{C}^n$ with Nash algebraic and irrational potential functions in a similar manner. 
\end{example}

\begin{remark}
It follows from Theorem 3.1 in \cite{[LM2]} that, there exist rational functions $h_1, \cdots,  h_m$ and positive numbers $\mu_1, \cdots, \mu_m$ such that $\log \left(\prod_{j=1}^m h_j^{\mu_j}(z) \right)$ is the potential function of the homogeneous K\"ahler metric on the bounded homogeneous domain. By polarization, Calabi's diastasis function fits into the form of $\log H(z, w)$ in Theorem  \ref{UA}.(ii). As a consequence of Theorem  \ref{UA}.(ii)(2), a bounded homogeneous domain equipped with a homogeneous K\"ahler metric and an indefinite complex Euclidean space do not share common complex submanifolds. This recovers Theorem 1.1.(ii) in \cite{[LM2]}.
\end{remark}

{\bf Statement}:
Data sharing not applicable to this article as no datasets were generated or analysed during the current study.


\end{document}